\documentclass[10pt,a4paper,oneside,final]{article}
\usepackage{amsmath,amsthm,amssymb,graphicx}
\usepackage{lineno}
\newtheorem{thm}{Theorem}[section]

\newtheorem{rem}[thm]{Remark}
\newtheorem{prop}[thm]{Proposition}
\theoremstyle{definition}
\newtheorem{defn}[thm]{Definition}
\newcommand{\Field}[1]{\mathbb{#1}}

\newcommand{\Z}{\Field{Z}}

\begin{document}

\title{
A study of the entanglement in systems with periodic boundary conditions%
}


\author{
 E Panagiotou $\flat$, C Tzoumanekas $\sharp$, S Lambropoulou $\flat$ ,K C Millett $\natural$\\
 and D N Theodorou $\sharp$}
\date{}
\maketitle

\noindent$\flat$ Department of Mathematics, National Technical University, Athens, GR 15780, Greece\\
$\sharp$ Department of Materials Science and Engineering, School of Chemical
Engineering, National Technical University, Athens, GR 15780, Greece\\
$\natural$ Department of Mathematics, University of California, Santa Barbara, California 93106\\

\begin{abstract}
We define the local periodic linking number, $LK$, between two oriented closed or open chains in a system with three-dimensional periodic boundary conditions. The properties of $LK$ indicate that it is an appropriate measure of entanglement between a collection of chains in a periodic system.
Using this measure of linking to assess the extent of entanglement in a polymer melt we study the effect of CReTA algorithm on the entanglement of polyethylene chains.
Our numerical results show that the statistics of the local periodic linking number observed for polymer melts before and after the application of CReTA are the same.
\end{abstract}

\section{Introduction}
Polymer melts are dense systems of macromolecules.
In such dense systems the conformational freedom and motion of each chain is significantly affected by entanglement with other chains which generates obstacles of topological origin to its movement \cite{Ed,PolPhys}. These obstacles are called \textit{topological constraints} (TCs) (Figure \ref{Theod}). Edwards suggested that the local interaction between the chains restricts their motion to a tube-like region. The axis of the tube is a coarse-grained representation of the chain and it is called the \textit{primitive path} \cite{Ed}. Owing to its simplicity, the tube model allows us to carry out a theoretical analysis and describe the dynamical and rheological properties of polymer melts\cite{Laso,kazu,krog1,PolPhys,Cossms}.
The CReTA (Contour Reduction Topological Analysis) algorithm \cite{TopAn} employs the Doi-Edwards perspective to study the entanglement in polymer melts. It is a coarse graining algorithm that reduces a computer generated atomistic sample to a simpler entanglement network of primitive paths (Figure \ref{Theod}), where inside the vertices occur TCs.

\begin{figure}
   \begin{center}
     \includegraphics[width=0.25\textwidth]{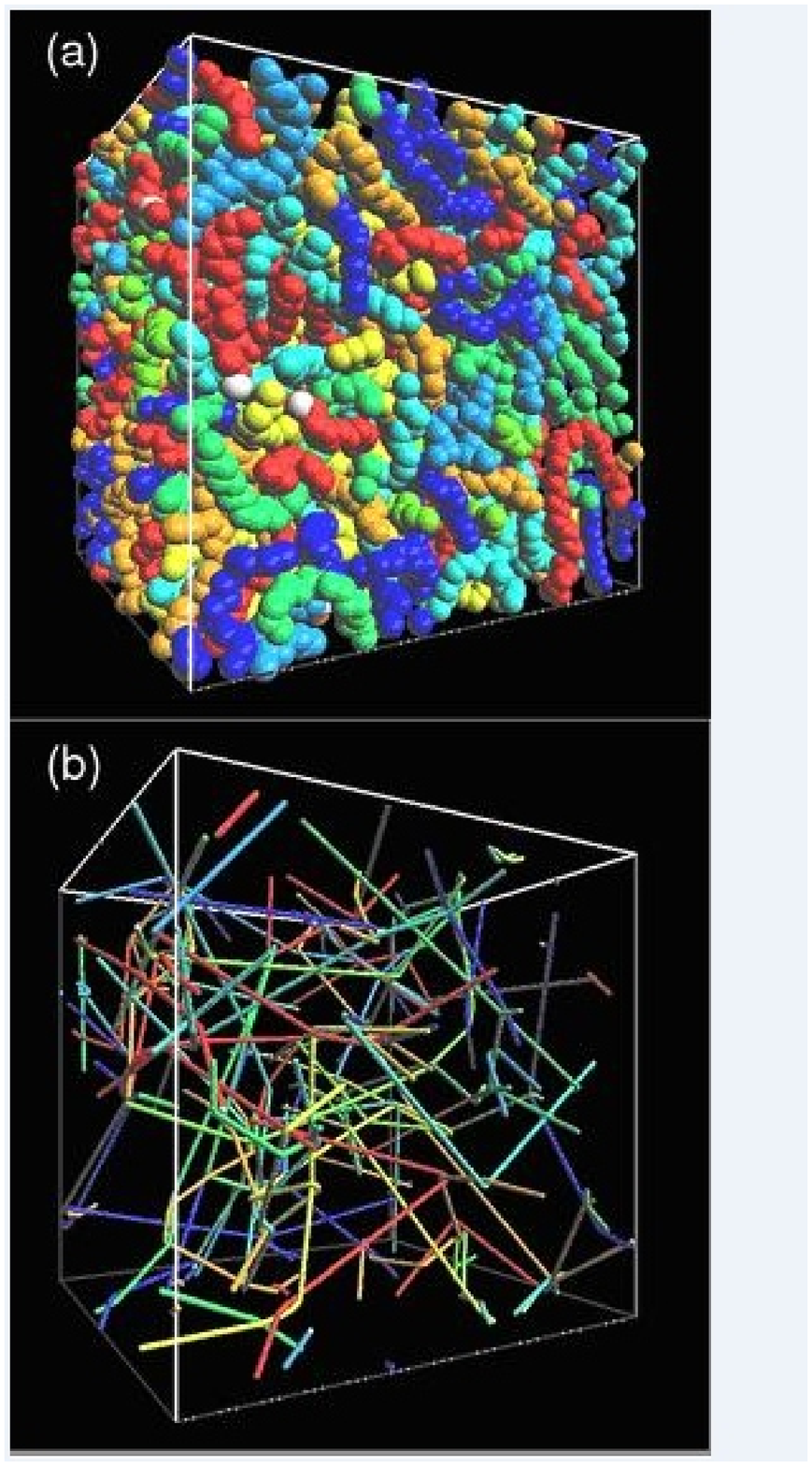}\includegraphics[width=0.6\textwidth]{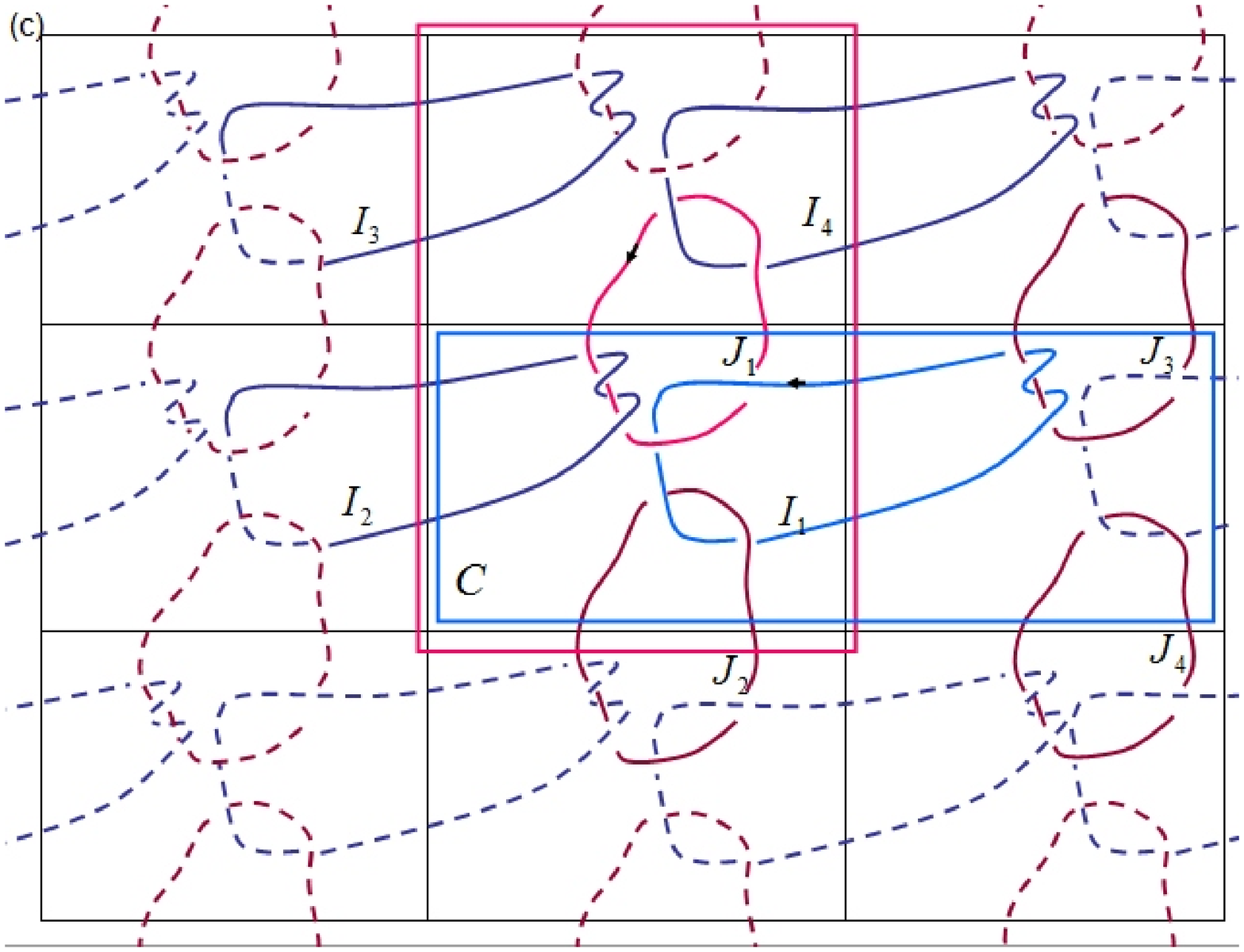}
     \caption{(a) Representative atomistic PE sample and (b) the corresponding reduced network. (c) The central cell $C$ and the periodic system it generates. The generating chain $i$ (resp. $j$) is composed by the blue (resp. red) arcs in $C$. The free chain $I$ (resp. $J$) is the set of blue (resp. red) chains in the periodic system. Highlighted are the minimal unfoldings of the images $I_1$ and $J_1$. For the computation of $LK(J,I)$ we have $LK(J,I)=L(J_1,I_1)+L(J_1,I_2)+L(J_1,I_3)+L(J_1,I_4)$.
We can see that $L(I_1,J_1)=L(J_1,I_1),L(I_1,J_2)=L(J_1,I_4),L(I_1,J_3)=L(J_1,I_2)$ and $L(I_1,J_4)=L(J_1,I_3)$, thus $LK(I,J)=LK(J,I)$.}
     \label{Theod}
   \end{center}
\end{figure}

The computer simulation of a polymer melt usually involves molecular dynamics and Monte Carlo simulations \cite{Th,Theod,Th3}.
In order to eliminate boundary effects, for the modeling of a bulk system Periodic Boundary Conditions (PBC) are applied \cite{TheodPBC}. In a PBC model, the \textit{cubical simulation box} or \textit{cell} is replicated throughout space to form an infinite lattice. In the course of the simulation, when a molecule moves in the simulation box, its periodic image in each one of the replication boxes moves in exactly the same way. Thus, as a molecule leaves the simulation box, one of its images will enter through the opposite face at exactly the same point.
The configuration of the molecules inside the simulation box generates the entanglement present in the continuum. Because of this, the periodic system induces special features in the entanglement of the simulated polymer melt.
For each chain in the melt we can distinguish between two types of topological constraints imposed on it by the other chains: local topological constraints, which can be observed inside a simulation box, and large scale topological constraints, which are observed only at a larger length.

The purpose of this paper is to describe methods by which one may quantify and extract local entanglement information in a system with PBC. More precisely, in Section 2 we describe the structure of the PBC polymer melt model. In Section 3 we introduce the periodic linking number and the local periodic linking number as measures of entanglement in such a system with PBC and we study their properties. In Section 4 we apply the local periodic linking number to polyethylene (PE) melts and study the effect of the CReTA algorithm on the linking present in the system.

\section{Periodic Boundary Conditions}

We study a polymer melt consisting of a collection of polymer chains of length $m$, by dividing space into a family of cubic boxes of volume $L^3$, where $L$ is the length of an edge of the cube, so that the structure of the melt in each cube is identical, i.e. we impose PBC on the melt \cite{PolPhys}. Specifically, we make the following definition :

\begin{defn} A \textit{cell} consists of a cube with $n$ arcs embedded into it such that arcs may terminate only in the interior of the cube or on a face, but not on an edge or corner, and those arcs which meet a face satisfy the PBC requirement. That is, to each ending point corresponds a starting point at exactly the same position on the opposite face of the cube.
See Figure \ref{Theod}(b) for an illustrative example.
\end{defn}

A cell generates a \textit{periodic system} in 3-space by tiling 3-space with the cubes so that they fill space and only intersect on their faces. This allows an arc in one cube to be continued across a face into an adjacent cube and so on. Since physical chains are of finite length, we require that resulting chains must be compact.
For practical purposes, only a finite number of copies of the cell are considered. We call such a collection a \textit{finite periodic system}.

Without loss of generality, we choose a cell of the periodic system that we call \textit{generating cell}. Then any other cell $c$ in the periodic system is a translation of the generating cell by a vector $\vec{c}=(c_x,c_y,c_z), c_x,c_y,c_z\in L \Z$. A \textit{generating chain} is the union of all the segments inside the cell the translations of which define a maximal connected arc in the periodic system. For each arc of a generating chain we choose an orientation such that the translations of all the arcs would define an oriented arc in the periodic system. For each generating chain we choose without loss of generality an arc and a point on it to be its \textit{base point} in the central cell. The smallest union of the copies of the cell needed for one complete unfolding of a generating chain shall be called the \textit{minimal unfolding}. The smaller number of copies of the cell whose union contains the convex hull of the complete unfolding of a generating chain shall be called the \textit{minimal topological cell}.

For generating chains we shall use the symbols $i,j,\dots$. A generating chain is said to be closed when the connected component that its segments construct in the periodic system is a closed chain.
The collection of all translations of the same generating chain $i$ shall be called \textit{free chain}, denoted $I$. A free chain is identified with the collection of its connected components, each of which is a translation of an unfolding of a generating chain in the periodic system. For free chains we will use the symbols $I,J,\dots$. An \textit{image} of a free chain is any connected component in it. For images of a free chain, say $I$, we will use the symbols $I_1,I_2,\dots$. A free chain is said to be closed, when its components are closed curves.

\begin{prop}\label{symm3}
Given a generating chain $i$ in a cell, and its corresponding free chain $I$ in the periodic system, every image of $I$ contains a translation of each arc of $i$ exactly once.
\end{prop}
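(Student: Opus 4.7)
The plan is to exploit the lattice-translation structure of the periodic system. Writing the free chain as $I=\bigcup_{\vec{v}\in L\Z^3}(i+\vec{v})$, every image of $I$ is a connected component of this set. I would first fix the canonical image $I_0$ containing the segments of $i$ themselves in the central cell, then show that every other image is of the form $I_0+\vec{v}$ for a unique $\vec{v}\in L\Z^3$, and thereby reduce the proposition to the special case of $I_0$: namely, that $I_0$ contains each arc of $i$ exactly once. Translating this assertion by each $\vec{v}\in L\Z^3$ will then give the statement for every image.

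For the ``at least once'' part, I would invoke the definition of generating chain directly: the segments $a_1,\ldots,a_k$ of $i$ are, by definition, precisely those whose $L\Z^3$-translates assemble into a single maximal connected arc in the periodic system. Hence $a_1,\ldots,a_k$ all lie in one and the same component, which is $I_0$, and translating shows every $I_0+\vec{v}$ contains a translate of every $a_l$. For the ``at most once'' part, I would argue by contradiction using compactness. If some image contained two distinct translates $a_l+\vec{v}_1$ and $a_l+\vec{v}_2$ of the same arc $a_l$, then translating that image by $-\vec{v}_1$ and by $-\vec{v}_2$ would produce two components, each containing $a_l$; since components are disjoint they must coincide, forcing the image to be invariant under the nonzero lattice translation $\vec{v}_2-\vec{v}_1$. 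This contradicts the requirement, stated in Section~2, that physical chains be compact.

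The main obstacle is bookkeeping rather than analysis: one has to keep clear the distinction between an arc of $i$ (a specific segment in the central cell), its many translates in the periodic system, and the images of the free chain which partition those translates. Once those roles are separated, both halves of the proposition follow quickly, existence from the very definition of generating chain and uniqueness from the elementary fact that a bounded set cannot be invariant under a nontrivial lattice translation.
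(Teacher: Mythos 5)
Your proof is correct and rests on the same key idea as the paper's: a repeated translate of an arc of $i$ within a single image forces that image to be invariant under a nonzero lattice translation (the paper phrases this as the sequence of arcs met along $I_1$ becoming periodic, so that $I_1$ fails to be compact), which is exactly your contradiction with boundedness. You additionally spell out the ``at least once'' half and the reduction to a single canonical image $I_0$, both of which the paper's sketch leaves implicit as consequences of the definition of a generating chain.
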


\begin{proof}[Sketch of Proof]
Let $i_m', i_{m+1}',\dotsc,i_{m+l}'$ denote a sequence of arcs that we meet along $I_1$. Each one of these arcs is a translation of an arc of $i$. Suppose that $i_{m+l}'=i_m'$. Then it must be that $i_{m+l+1}'=i_{m+1}'$, giving $i_{m+2l}'=i_m'$. Thus $I_1$ not compact, contradiction.

\end{proof}

Proposition \ref{symm3} shows that each image contains exactly one base point. We call the image of $I$ whose base point lies in the generating cell the \textit{parent image} and any other image of $I$ can be defined by a vector with respect to the base point of the parent image.

The geometric and topological information resulting from a periodic system is carried in its generating cell, but the global entanglement structure seen in the periodic system may be different from that seen inside a cell.
\textit{Local topological constraints (local TCs)} are those tight conformations of the arcs inside a cell that form barriers to small movements of a chain since a chain is prohibited from passing through itself or any other chain inside the cell by motions keeping the endpoints of the arcs of the generating chains fixed.

\section{Linking in PBC systems}

The Gauss linking number is a traditional measure of the algebraic entanglement of two disjoint oriented circles that extends directly to disjoint oriented open chains \cite{ACN,Uni,Ed}.

\begin{defn}[Gauss 1877]\label{lk}The \textit{Gauss linking number} of two disjoint oriented curves  $l_1$ and $l_2$, whose arc-length parametrization is $\gamma_1(t),\gamma_2(s)$ respectively, is defined as a double integral over $l_1$ and $l_2$:

\begin{equation}
L(l_1,l_2)=\frac{1}{4\pi}\int_{[0,1]}\int_{[0,1]}\frac{(\dot\gamma_1(t),\dot\gamma_2(s),\gamma_1(t)-\gamma_2(s))}{\left|\gamma_1(t)-\gamma_2(s)\right|^3}dt ds
\end{equation}

\noindent where $(\dot\gamma_1(t),\dot\gamma_2(s),\gamma_1(t)-\gamma_2(s))$
is the triple product of $\dot\gamma_1(t),\dot\gamma_2(s)$ and
$\gamma_1(t)-\gamma_2(s)$.
\end{defn}

\bigskip

In the case of closed chains the Gauss linking number is a topological invariant. If it is equal to zero, the two chains are said to be algebraically unlinked.
For open chains the Gauss linking number is a continuous function in the space of configurations and as the endpoints of the chains tend to coincide, it tends to the linking number of the resulting closed chains \cite{Uni}.
For open chains, the Gauss linking number may not be zero, even for chains whose topological cells do not intersect. But as the distance between them increases, the Gauss linking number tends to zero \cite{hira}. We note that when one considers open chains, the concept of topological linking does not apply, as open chains are always topologically unlinked, i.e. they may be deformed so as to lie in disjoint topological cells.

In the case of a periodic system a different measure of entanglement is needed in order to capture all the TCs and the effect of the periodicity of the conformations. Here, we apply the Gauss linking number and its extension to open chains to the situation of chains defined in a PBC context providing a measure of the large scale entanglement between the chains. We propose the following definition of the linking number of two free chains in a system with PBC:

\begin{defn}[Periodic linking number]\label{lk3} Let $I$ and $J$ denote two free chains in a periodic system. Suppose that $I_1$ is an image of the free chain $I$ in the periodic system. The \textit{periodic linking number}, $LK_P$, between two free chains $I$ and $J$ is defined as:

\begin{equation}\label{lk4}
LK_P(I,J)=\sum_{u}L(I_1,J_u)
\end{equation}

\noindent where the sum is taken over all the images $J_u$ of the free chain $J$ in the periodic system.

\end{defn}

For the purposes of applications to polymer melts, where the density is relatively high, the topological constraints have a strongly local character and one may wish to focus attention on the local TCs \cite{Ralph,TopAn,Onset12,kazu}. As a consequence, the following local periodic linking measure may provide a more relevant measure of the entanglement observed in a polymer melt.

\begin{defn}[Local periodic linking number]\label{lk2} Let $I$ and $J$ denote two free chains in the periodic system. Let $J_1,J_2,\dotsc,J_n$ denote the images of $J$ that intersect the minimal unfolding of an image of $I$, say $I_1$. The \textit{local periodic linking number}, $LK$, between two free chains $I$ and $J$ is defined as:

\begin{equation}\label{lk1}
LK(I,J)=\sum_{1\leq u\leq n}L(I_1,J_u)
\end{equation}

\end{defn}

\smallskip

\noindent\textbf{Example 3:} Consider the cell as shown in Figure \ref{Theod}c. Then for the computation of $LK(I,J)$ we need to consider the images of $J$ which intersect the minimal unfolding of $I_1$, as shown in Figure \ref{Theod}c: $LK(I,J)=L(I_1,J_1)+L(I_1,J_2)+L(I_1,J_3)+L(I_1,J_4)$.

\bigskip

We note that if the chains do not touch the faces of the cell, and thus lie entirely inside the cell, $LK_P$ and $LK$ are equal to the Gauss linking number of the two chains.
\noindent We shall focus on $LK$. In a sequel paper we will study $LK_P$ and its relation to $LK$.

\begin{figure}
   \begin{center}
     \includegraphics[width=0.3\textwidth]{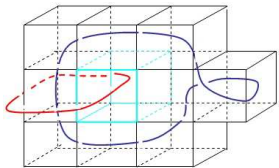}
     \caption{An example for which $LK\neq LK_P$. The red curve does not intersect the minimal unfolding of the blue curve, but they are topologically linked. If the blue chain moves into the blue cell, then the red curve will intersect its minimal unfolding and $LK=LK_P$.}
     \label{counter}
   \end{center}
\end{figure}

\begin{rem}\label{crem}\rm The local periodic linking number, $LK$, depends upon the size of the cell. Also, as the minimal unfolding may change under the motion of $I$, then $LK$ may change as well. Thus $LK$ is not a topological invariant for two closed free chains.
The minimal unfolding of an image, say $I_1$, is homeomorphic to a 3-manifold, say $M$. If the genus of $M$ is greater than $0$, there may be an image of $J$ that is linked with $I_1$ but that does not intersect the minimal unfolding of $I_1$ (see Figure \ref{counter}).
We note though, that the size of a simulation cube that is appropriate for the simulation of polymer melts and other polymer systems, is such that it is impossible for conformations such as the one shown in Figure \ref{counter} to arise.
So, for all practical purposes, the local periodic linking number is an invariant under the isotopic motion of closed chains.
\end{rem}

\subsection{Consequences and properties of the local periodic linking number}

\begin{prop}\label{sym} The local periodic linking number $LK$ between two free chains $I$ and $J$ of a system with PBC is symmetric, that is $LK(I,J)=LK(J,I)$.
\end{prop}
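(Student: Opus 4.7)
The plan is to pair each term $L(I_1,J_u)$ appearing in $LK(I,J)$ with a unique term $L(J_1,I_v)$ appearing in $LK(J,I)$ so that the two are numerically equal. The two key facts I would use are the translation invariance of the Gauss linking number (shifting both arguments by a common vector leaves the double integral unchanged) together with its obvious symmetry $L(\alpha,\beta)=L(\beta,\alpha)$ coming from Definition \ref{lk}.

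For any image $J_u$ of $J$, write $J_u = J_1 + \vec{c}_u$ with $\vec{c}_u\in L\Z^3$. Translation invariance then yields
\[
L(I_1,J_u) \;=\; L(I_1-\vec{c}_u,\,J_1) \;=\; L(J_1,\,I_{v(u)}),
\]
where $I_{v(u)} := I_1-\vec{c}_u$ is again an image of $I$, and the second equality is the symmetry of $L$. The assignment $u\mapsto v(u)$ is a bijection between all images of $J$ and all images of $I$, since $\vec{c}_u$ can be read off from $I_{v(u)}$ and the inverse map is obtained from the analogous construction based at $I_1$.

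It remains to verify that $u\mapsto v(u)$ restricts to a bijection between the index sets of the two sums in Definition \ref{lk2}. Because the minimal unfolding of an image is by definition the union of cells in which the image has arcs, the condition ``$J_u$ intersects the minimal unfolding of $I_1$'' is equivalent to ``$J_u$ and $I_1$ occupy a common cell of the tiling.'' Shifting both curves by $-\vec{c}_u$ converts this into ``$J_1$ and $I_{v(u)}$ occupy a common cell,'' which is in turn equivalent to ``$I_{v(u)}$ intersects the minimal unfolding of $J_1$.'' Hence $J_u$ contributes to $LK(I,J)$ exactly when $I_{v(u)}$ contributes to $LK(J,I)$, and summing the equal terms closes the proof. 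The only step needing care is the cell-sharing reformulation, which rests on the observation that the cells of a minimal unfolding are precisely those containing an arc of the image, so that the unfolding translates rigidly with the image under any lattice shift.
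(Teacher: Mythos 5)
Your proof is correct and follows essentially the same route as the paper's: translate $J_u$ back to $J_1$, use translation invariance and symmetry of the Gauss linking number, and match the index sets of the two sums. In fact your cell-sharing reformulation makes explicit the step that the paper's proof only asserts (namely that $J_u$ meeting the minimal unfolding of $I_1$ is equivalent to $I_1$ meeting the minimal unfolding of $J_u$), so no changes are needed.
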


\begin{proof}

Let us consider an image of $J$, say $J_1$, that intersects the minimal unfolding of an image of $I$, say $I_1$. Then every other image of $J$ that intersects the minimal unfolding of $I_1$ is a translation of $J_1$ by a vector $\vec{v_u}$. Then we have that $I_1$ intersects the minimal unfolding of each $J_u=J_1+\vec{v_u}$. Thus the translation of $I_1$ by $-\vec{v_u}$ , i.e. $I_u=I_1-\vec{v_u}$, intersects the minimal unfolding of $J_1$.

\end{proof}

The local periodic linking number is a continuous function. Indeed, the set of points of discontinuity in the space of possible configurations is of measure zero, so $LK$ is a continuous function of the chain coordinates almost everywhere.
The following properties follow from the fact that $LK$ is a measure of entanglement.

\noindent (i) In the case of closed chains $LK$ is an integer. Indeed, it is equal to half the algebraic number of crossings between a projection of an image of $I$, say $I_1$, and the projection of the images of $J$ which intersect the minimal unfolding of $I_1$ in any generic projection direction. For closed chains whose minimal unfolding has genus zero, then $LK$ is invariant under movement of the chain.

\noindent (ii) In the case of open chains, $LK$ is equal to the average algebraic number of crossings between a projection of an image of $I$, say $I_1$, and the projection of the images of $J$ that intersect the minimal unfolding of $I_1$. Moreover, as the endpoints of the generating chains move closer, $LK$ tends to the local linking periodic number of the closed free chains.

Further, the local periodic linking number has the following properties with respect to the structure of the cell:

\noindent (i) $LK$ captures the contributions of all the local TCs that the one free chain imposes on an image of the other, but it also takes into account the global geometrical structure of the chains.

\noindent (ii) $LK$ is independent of the choice of the cell and of the translated image of a free chain $I$ that we use for the computation of $LK(I,J)$ with another free chain $J$.

\noindent (iii) For closed chains in a periodic system, one may amalgamate cells to create larger generating cells in a new PBC such that $LK=LK_P$, and then $LK$ is a topological invariant.

\subsection{Advantages of the local periodic linking number for the study of the linking in PBC}

Systems which employ PBC consist of an infinite number of translations of a finite number of chains. Applying a traditional measure of entanglement would imply computations involving an infinite number or, at least, a very large number of chains. Furthermore, because of the periodicity of the system, there is a scale at which one has homogeneity, so that entanglement can be characterized by the local structure and the degree of its homogeneity.

The local periodic linking number, $LK$, is computed using a finite number of cells and, therefore, a finite number of chains thereby reducing the complexity of its calculation.

Ideally, one would like to compute a linking measure directly from one cell, but the arcs of the generating chains inside the cell are relatively short. In order to capture the greater degrees of entanglement, or even complex knotting, a large number of arcs must be employed in the creation of a complex chain.

The local periodic linking number is computed between long polymer chains, taking into consideration the entire geometrical structure of the chains.
Moreover, if one wanted to constrain the analysis to a smaller number of cells, it would be impossible to give a definition of linking that is symmetric and captures all the entanglement present in a cell. Indeed:

\begin{prop}\label{symm2}
The local periodic linking number $LK(I,J)$ between two closed or open free chains $I$ and $J$ takes into consideration as few as possible cells required to assess all the local TCs  that the one free chain imposes on an image of the other and at the same time to be symmetric.
\end{prop}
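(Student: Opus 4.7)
The plan is to establish two opposing inclusions: first, that the set of images summed over in Definition \ref{lk2} suffices to record every local topological constraint that $J$ imposes on the image $I_1$ of $I$; and second, that any proper subcollection of these images would either overlook such a constraint or else destroy the symmetry established in Proposition \ref{sym}. Together these will show that the index set used by $LK$ is simultaneously the smallest possible collection of cells that is both exhaustive (with respect to local TCs) and compatible with symmetry.

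For the first inclusion I would argue on geometric grounds. A local TC imposed by $J$ on $I_1$ is, by the description in Section 2, a tight conformation occupying the region spanned by $I_1$ and the relevant component of $J$. The minimal unfolding of $I_1$ is by construction the smallest union of cells that contains $I_1$ entirely; any image $J_u$ participating in such a local TC must therefore intersect that unfolding, for otherwise $J_u$ and $I_1$ would be separated by at least one full cell and the conformation would not qualify as local. Conversely, every image $J_u$ intersecting the minimal unfolding of $I_1$ contributes a Gauss linking $L(I_1,J_u)$ which records precisely the algebraic crossings of local TCs between $I_1$ and $J_u$. Summing these contributions therefore captures the full local entanglement imposed on $I_1$.

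For the minimality direction I would proceed by contradiction. Suppose a measure $LK'(I,J)$ were defined by summing Gauss linking contributions over a strict subset $S$ of the images used in $LK(I,J)$. Then some $J_u$ intersecting the minimal unfolding of $I_1$ is omitted, and by the previous paragraph a local TC is missed, so $LK'$ fails to be exhaustive. If instead one tries to preserve exhaustiveness by reducing the set of cells on both sides in a coordinated fashion, then the translation argument from the proof of Proposition \ref{sym} shows that removing an image $J_u=J_1+\vec{v_u}$ from the sum defining $LK'(I,J)$ forces the symmetric removal of $I_u=I_1-\vec{v_u}$ from $LK'(J,I)$; the local TC previously witnessed by $L(I_1,J_u)$ is then equally omitted on the $J_1$ side, reproducing the deficit. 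Hence no strictly smaller symmetric collection of cells can be exhaustive, and the cell count used by $LK$ is minimal.

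The main obstacle I foresee is giving a sufficiently precise account of \emph{local TC} to justify the biconditional "the image $J_u$ imposes a local TC on $I_1$ if and only if $J_u$ intersects the minimal unfolding of $I_1$". The subtlety noted in Remark \ref{crem}, namely that when the minimal unfolding has positive genus an image of $J$ can be topologically linked with $I_1$ without touching the minimal unfolding, means one must either restrict attention to the physically relevant scale (as the authors do) or refine the statement using the minimal topological cell on the boundary cases. Handling this gap cleanly, without overreaching past the regime in which the proposition is intended to hold, is the principal technical hurdle.
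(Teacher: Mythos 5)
Your exhaustiveness step (every local TC that $J$ imposes on $I_1$ is witnessed by a piece of $J$ lying in the minimal unfolding of $I_1$) matches the opening sentence of the paper's sketch, and your caveat about the positive-genus situation of Remark \ref{crem} is well placed. The problem is in your minimality step. You quantify only over \emph{strict subsets of the collection of whole images} $J_u$, so what you actually argue is minimality within the class of sums $\sum_{u\in S}L(I_1,J_u)$. But the proposition is about minimizing the number of \emph{cells}, and $LK$ uses strictly more cells than the minimal unfolding of $I_1$: an image $J_u$ that meets the minimal unfolding generally extends well beyond it. The genuinely smaller candidate you must rule out is the one the paper's proof starts from, namely $LK_{mu}(I,J)=\sum_l L(I_1,j_l)$, where the $j_l$ are only the translated \emph{arcs} of $j$ contained in the minimal unfolding of $I_1$. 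By your own first paragraph this quantity already records every local TC while using no cells beyond the minimal unfolding, so exhaustiveness alone does not force the extra cells; your contradiction argument, which only ever deletes whole images, never confronts this candidate. (Relatedly, your claim that $L(I_1,J_u)$ records ``precisely'' the local crossings is too strong --- the paper itself notes that $LK$ also picks up the global geometric structure of the images.)

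What forces the extra cells is symmetry, and this is the actual content of the paper's proof: $\sum_l L(I_1,j_l)\neq\sum_n L(J_1,i_n)$ in general, because truncating $J$ to the minimal unfolding of $I_1$ and truncating $I$ to the minimal unfolding of $J_1$ are not compatible operations, whereas the translation correspondence $J_u=J_1+\vec{v_u}\leftrightarrow I_u=I_1-\vec{v_u}$ of Proposition \ref{sym} only pairs up \emph{whole} images. The minimal repair of this asymmetry is to complete each arc $j_l$ to the full image containing it, which converts $LK_{mu}$ into $LK$. Your ``coordinated reduction'' paragraph gestures at symmetry but again only in terms of dropping whole images, so it does not close this gap. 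To fix the argument you would need to add the comparison between $LK_{mu}(I,J)$ and $LK_{mu}(J,I)$, exhibit their failure to agree, and show that restoring agreement requires exactly the cells that $LK$ uses.
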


\begin{proof}[Sketch of Proof]

The minimal unfolding (\textit{mu}) of $I_1$ contains all the local TCs imposed to $I_1$ by $J$. But we have that $LK_{mu}(I,J)=\sum_l L(I_1,j_l)\neq \sum_n L(J_1,i_n)$ $=LK_{mu}(J,I)$, where $j_l,i_n$ are the translated arcs of $j$ and $i$ in the minimal unfolding of $I_1$ and $J_1$ respectively. By adding the missing terms in each summation until we get a symmetric definition of linking we obtain $LK_{mu}=LK$.
\end{proof}

\section{Application to polymers}

\subsection{The CReTA algorithm}

To study the entanglement complexity present in a given polymer system, one has to coarse grain the polymer chains
at a level where certain geometrical characteristics relevant to entanglement become evident \cite{Fot}.
The CReTA (Contour Reduction Topological Analysis) \cite{TopAn} algorithm fixes chain ends in space, and
by prohibiting chain crossing, it minimizes (shrinks) simultaneously
the contour lengths of all chains, until they become
sets of rectilinear strands coming together at the nodal points
(TCs) of a network of primitive paths.
When there are no possible alignments left, then we shrink the diameter of the beads of each chain and continue the same process, until a minimum thickness is achieved and no alignment moves are possible.
Figure \ref{Theod} shows an atomistic polymer sample and the corresponding reduced network.

\subsection{Numerical Results}

The CReTA algorithm has been shown \cite{TopAn,Onset12} to give a meaningful representation of the underlying topology of a polymer melt by comparing it to experimental data.
Using the local periodic linking number $LK$, we determine the extent to which the CReTA method preserves the linking measure of entanglement in PE melts, or if critical entanglement information is lost in the reduction process. Also, we perform the same comparison after end-to-end closure of the chains of a PE melt and its CReTA reduced melt.

As we mentioned earlier, linear polymer chains are not linked or knotted in the topological sense. A method to study the entanglement of linear polymers is to perform closure of the chains and compute the resulting knot type \cite{hanse,LinearTying}. Using $LK$ we will test if the method of direct end-to-end closure \cite{Laso,hanse,virnau}is reliable and if $LK$ can detect topological differences between the open and closed original and reduced systems.

The data analyzed concerns 80 PE frames~\cite{TopAn} of density $\rho\approx 0.77-0.78 g/cm^3$. Each frame is generated by 8 PE chains of 1000 beads each, so finally our results on $LK$ are based on 2240 distinct pairs of PE chains.

\begin{figure}
   \begin{center}
     \includegraphics[width=0.4\textwidth]{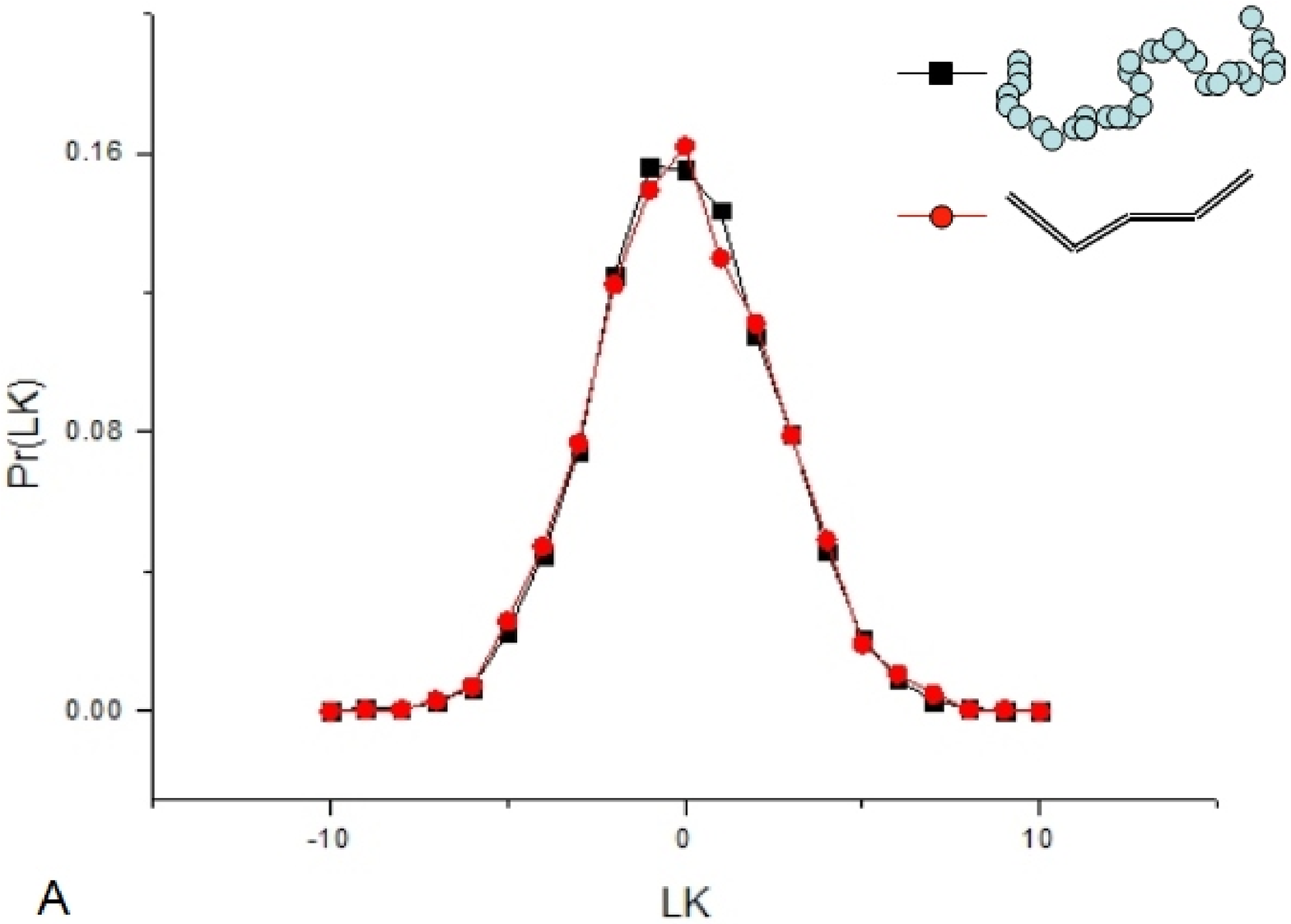}\includegraphics[width=0.4\textwidth]{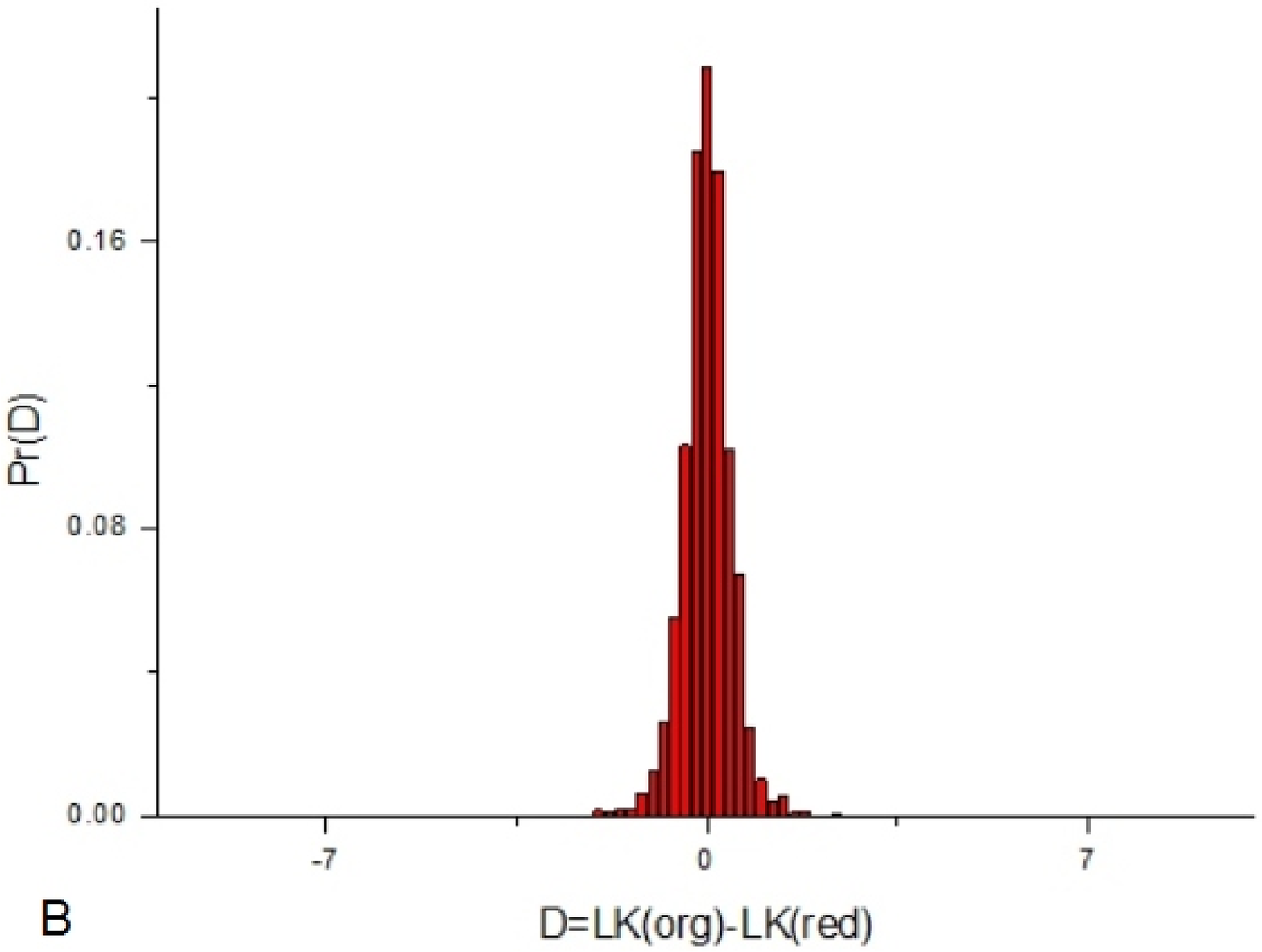}
     \label{OORO}
   \end{center}
\end{figure}

\begin{figure}
   \begin{center}
     \includegraphics[width=0.4\textwidth]{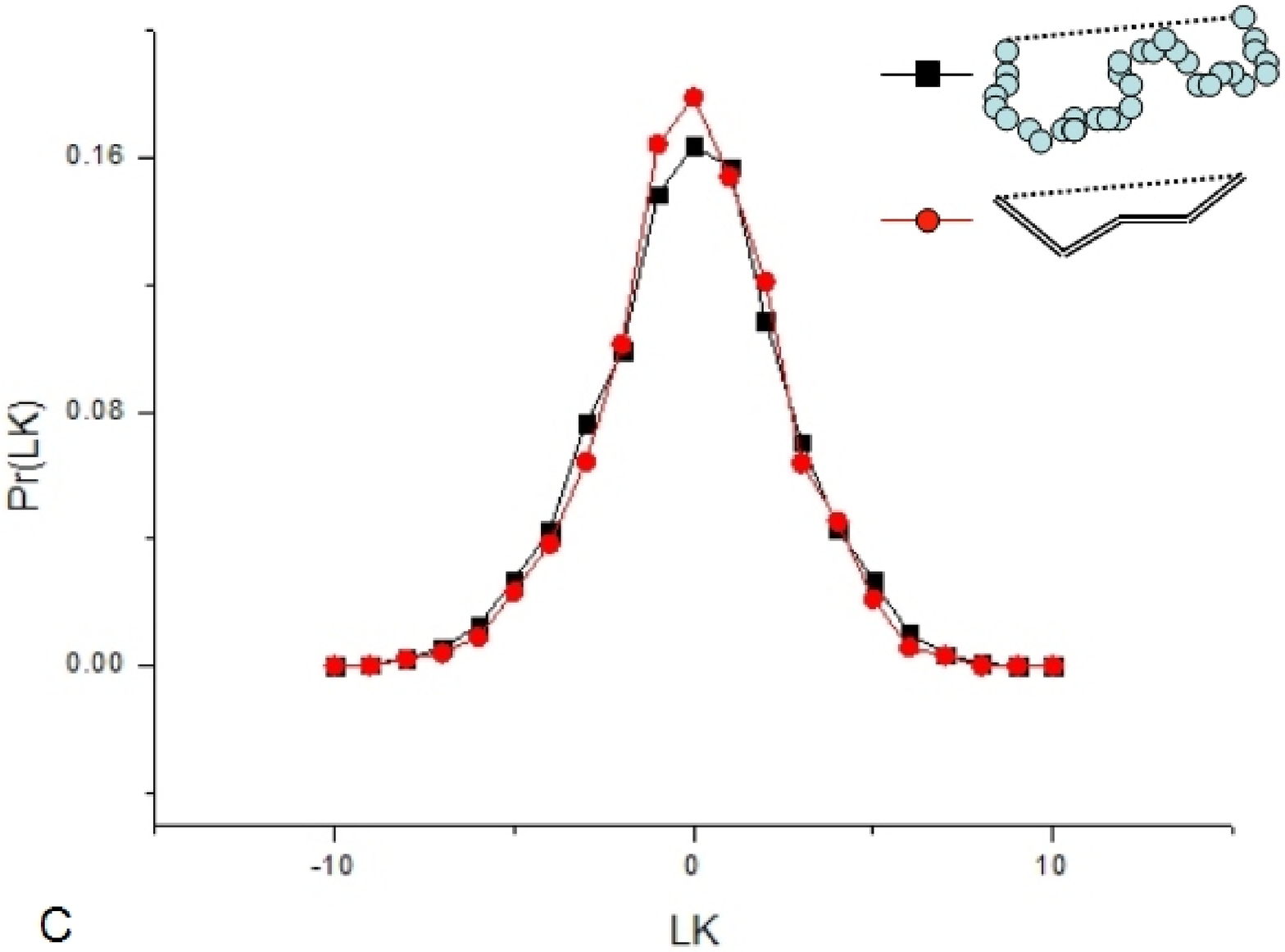}\includegraphics[width=0.4\textwidth]{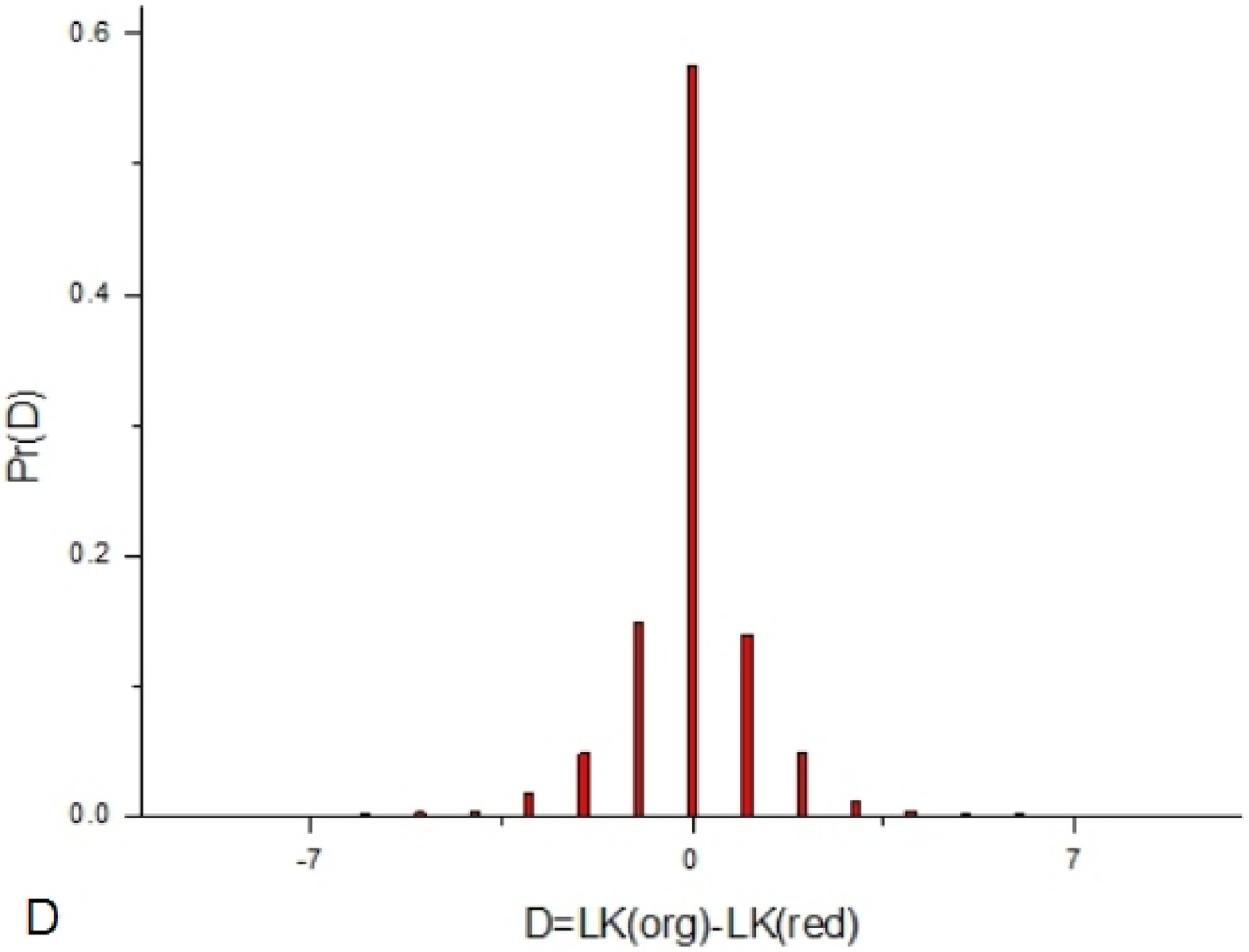}
     \caption{Figure A shows the normalized probability distribution of $LK$ for the original and the corresponding reduced chains in a PE-1000 melt and Figure B the probability distribution of the corresponding differences. Figure C shows the normalized probability distribution of $LK$ for the original and reduced chains after end-to-end closure and Figure D shows the corresponding differences}
     \label{OCRC}
   \end{center}
\end{figure}

First, we compute the normalized probability distribution of $LK$ for pairs of PE chains in a frame and compare it to the $LK$ for the corresponding reduced pairs.
Figure \ref{OCRC}A shows the probability distribution of the pairwise local periodic linking numbers for the original and CReTA reduced PE chains. The two distributions are quite similar. The mean absolute value of $LK$ is $1.97$ and the mean absolute value of $LK$ of pairs of reduced chains is $2.01$.
Figure \ref{OCRC}B shows the probability distribution of the difference of $LK$ before and after the application of the CReTA algorithm, i.e. $D=LK_{original}-LK_{reduced}$. We note that the distribution is quite narrow (with standard deviation $0.30$), has mean $-0.19$,
and mean absolute difference (i.e. $<|D|>$) of the local periodic linking between original and reduced chains of $0.29$,
indicating that \textit{$LK$ is about the same for original and reduced pairs of chains}. Thus, at least with respect to $LK$, one can study the entanglement of a PE melt in its reduced representation after the application of the CReTA algorithm.

\begin{figure}
   \begin{center}
     \includegraphics[width=0.4\textwidth]{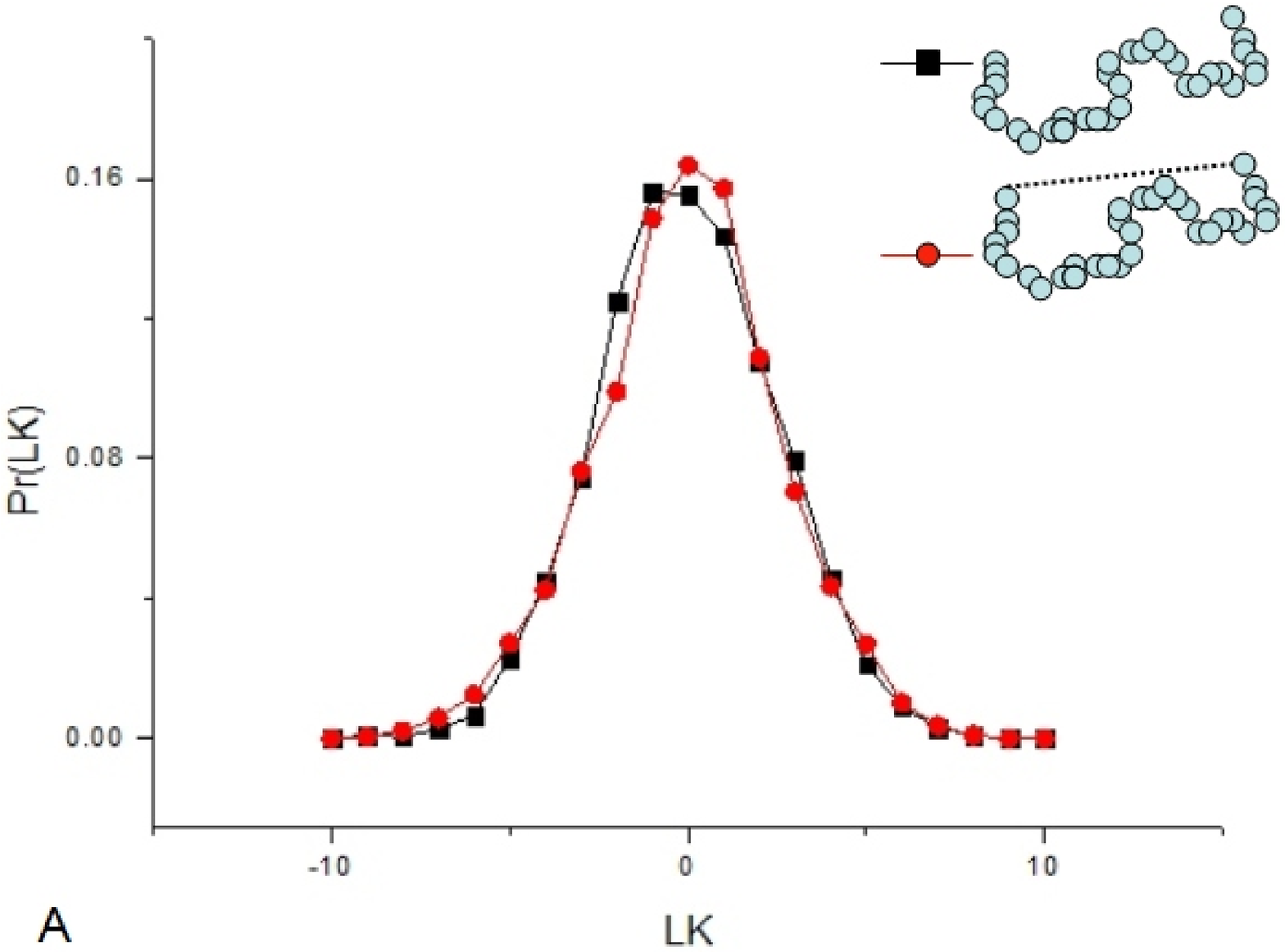}\includegraphics[width=0.4\textwidth]{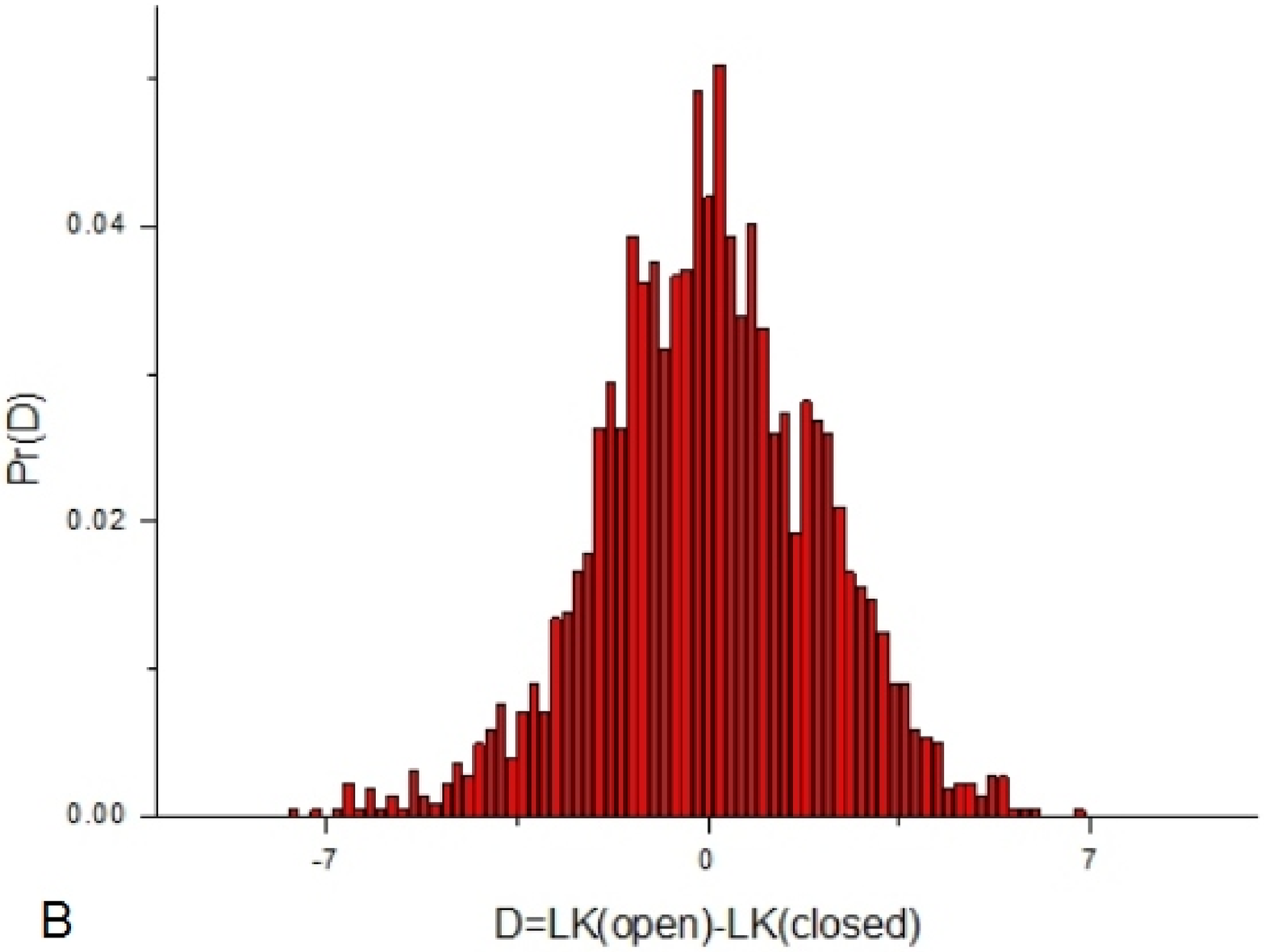}
   \end{center}
\end{figure}

\begin{figure}
   \begin{center}
     \includegraphics[width=0.4\textwidth]{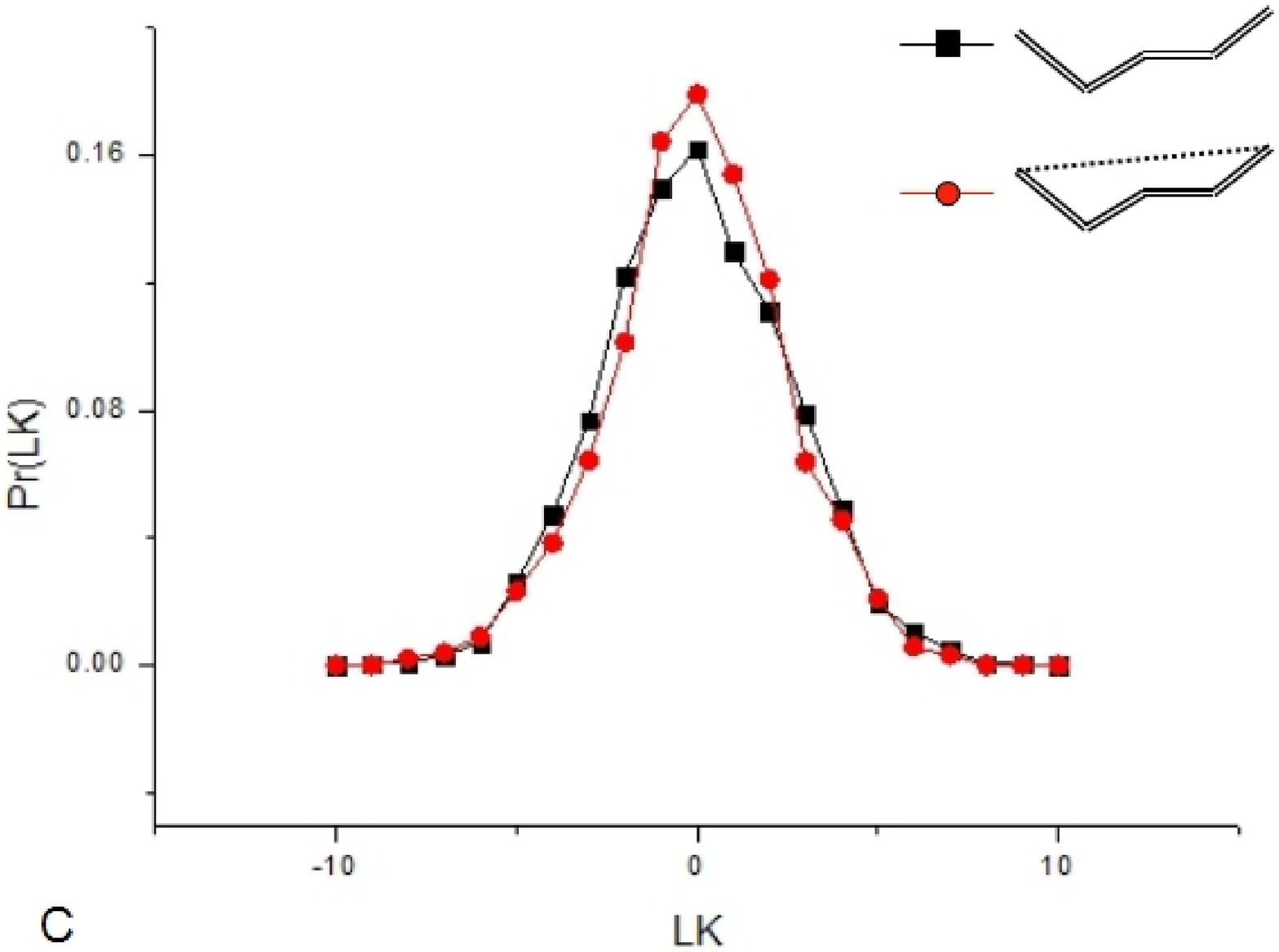}\includegraphics[width=0.4\textwidth]{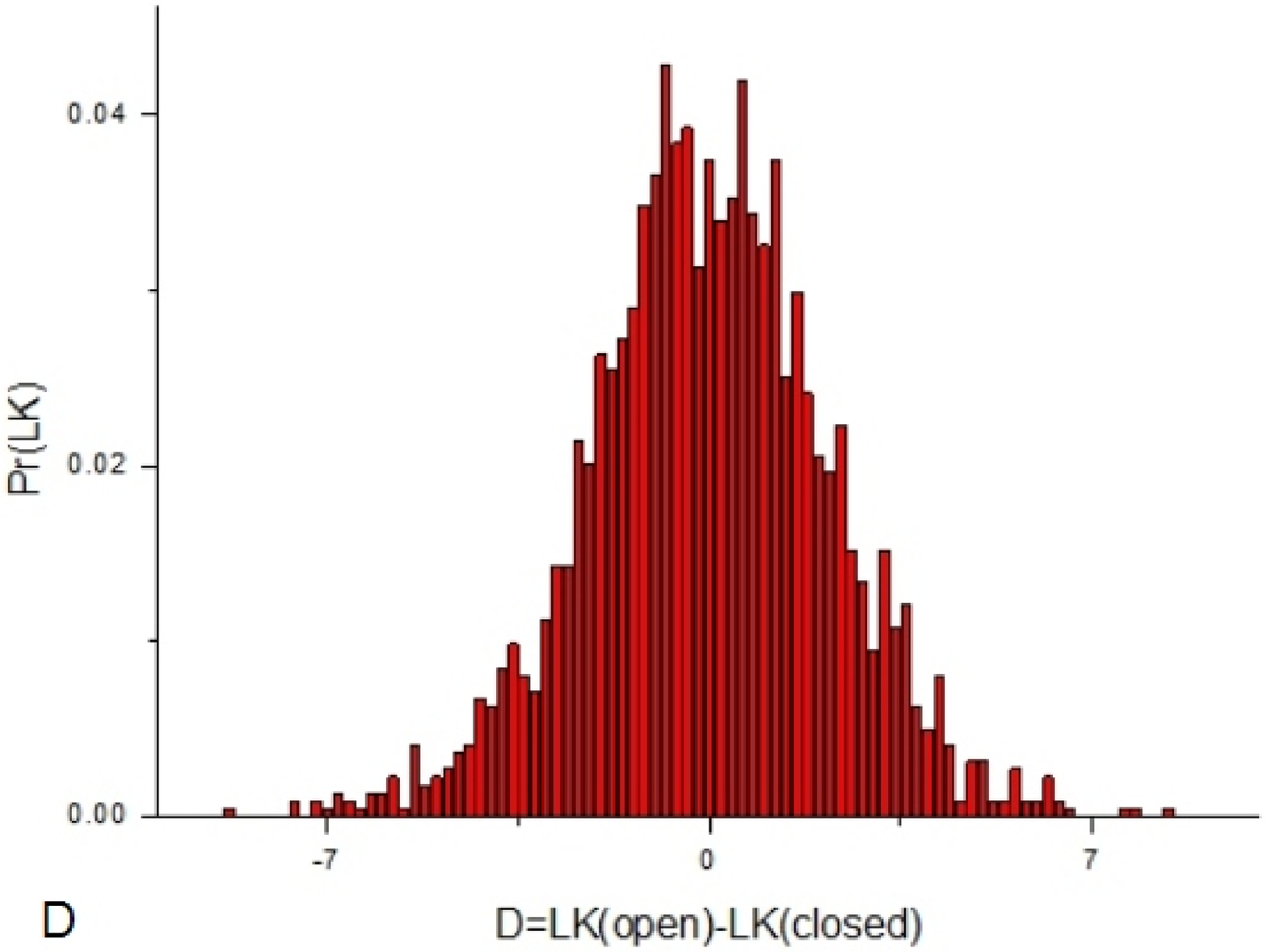}
     \caption{Figure A shows the normalized probability distribution of $LK$ for the original chains before and after end-to-end closure and Figure B the probability distribution of the corresponding differences. Figure C shows the normalized probability distribution of $LK$ for the reduced chains before and after end-to-end closure and Figure D shows the corresponding differences}
     \label{OOOC}
   \end{center}
\end{figure}

Next, we compute the local periodic linking number $LK$ after performing end-to-end closure to the original chains and compare it to the $LK$ after performing end-to-end closure to the corresponding reduced chains. Figure \ref{OCRC}C shows the normalized $LK$ for the end-to-end closures of the original and the corresponding reduced chains. We notice a smaller overlap of the two distributions. The mean absolute value of $LK$ for the original closed chains is $2.01$ and the mean absolute value of $LK$ for the reduced closed chains is $1.86$. Figure \ref{OCRC}D shows the normalized probability distribution of the difference $D=LK_{OrgClosed}-LK_{RedClosed}$. We can see that the distribution is a bit broader having standard deviation $0.92$ in comparison with the open data. The mean absolute difference is $0.57$. As we mentioned in Remark \ref{crem}, we could expect $LK$ to be invariant for closed chains in most cases. The end-to-end closure is performed in this case before and after the application of the CReTA algorithm. Thus the observed differences between the original and reduced chains are most likely due to movements during the reduction algorithm in which a chain crossed the path of the end-to-end closure.
We can see that the method of end-to-end closure retains much but not all the topological information from the original in the reduced systems.

Next we compute the difference between the $LK$ for open and closed original chains and the difference between the $LK$ for open and closed reduced chains. Figures \ref{OOOC}A and \ref{OOOC}C show the normalized $LK$ for the original open and the corresponding original closed chains and the normalized $LK$ for the reduced open and the corresponding reduced closed chains respectively. In both cases, we notice an even smaller overlap between the two distributions. In Figures \ref{OOOC}B and \ref{OOOC}D is shown the normalized probability distribution of the difference $LK_{OrgOpen}-LK_{OrgClosed}$ and of the difference $LK_{RedOpen}-LK_{RedClosed}$ respectively. We can see that the distributions are broader, with standard deviations $2.00$ and $1.91$, respectively. The mean absolute difference is $1.57$ and $1.52$ respectively. This shows that the local periodic linking number retains similar but distinct information for open and closed chains. 

\section{Conclusions}
The local periodic linking number, $LK$, provides the means to effectively study the linking present in a periodic system as a measure of its entanglement. By employing $LK$ and comparing the linking distributions before and after application of the CReTA algorithm we have shown that CReTA retains the linking information, and that $LK$ reflects topological differences between open and closed chains.
Our numerical results also show that the method of end-to-end closure of the chains is very sensitive to the movement of the chains, even if the TCs are preserved. This suggests that the local periodic linking number may be a stronger measure of entanglement for open chains.



\begin{thebibliography}{99}

\bibitem{Ed} M. Doi and S. F. Edwards, \textit{The Theory of Polymer Dynamics} (Clarendon Press, Oxford, 1986)

\bibitem{PolPhys} M. Rubinstein and R. Colby, \textit{Polymer Physics}, (Oxford University Press,2003).

\bibitem{kazu} K. Iwata and S. F. Edwards, J. Chem. Phys.,\textbf{90}, (1988), 4567.

\bibitem{krog1} M. Kr\"{o}ger, Comp. Phys. Commun., \textbf{168}, (2005), 209.


\bibitem{Laso} M. Laso, N. C. Karayiannis, K. Foteinopoulou, M. L. Mansfield and M. Kr\"{o}ger, Soft Matter, \textbf{5}, (2009), 1762.

\bibitem{Cossms} C. Tzoumanekas and D. Theodorou, Current Opinion in Solid State and Materials Science, \textbf{39}, (2006), 4592.


\bibitem{TopAn} C. Tzoumanekas and D. Theodorou, Macromolecules, \textbf{39}, (2006), 4592.


\bibitem{Th} P. Pant and D. Theodorou, Macromolecules, \textbf{28}, (1995), 7224.

\bibitem{Theod} D. Theodorou, \textit{Variable-Connectivity Monte Carlo Algorithms for the Atomistic Simulation of Long-Chain Polymer Systems} (P.Nielaba, M. Mareschal, G. Ciccotti, Springer-Vwelang Berlin Heidelberg, 2002)


\bibitem{Th3} M. J. Kotelyanskii and D. N. Theodorou, \textit{Simulation Methods for Polymers}, (Marcel Dekker, New York, 2004)


\bibitem{TheodPBC} D. N. Theodorou, T. D. Boone, L. R. Dodd and K. F. Mansfield, Makromol. Chem. Theory Simul., \textbf{2}, (1993), 191


\bibitem{ACN}Y. Diao, A. Dobay, R. B. Kushner, K. Millett and A. Stasiak, J. Phys. A: Math. Gen., \textbf{36}, (2003), 11561.

\bibitem{Uni}E. Panagiotou, K. C. Millett and S. Lambropoulou, J. Phys. A: Math. Theor., \textbf{43}, (2010), 045208.


\bibitem{hira} N. Hirayama, K. Tsurusaki and T. Deguchi, J. Phys. A: Math. Theor., \textbf{42}, (2009), 105001.



\bibitem{Onset12} C. Tzoumanekas, F. Lahmar, B. Rousseau and D. N. Theodorou, Macromolecules, \textbf{42}, (2009), 7474\\
F. Lahmar, C. Tzoumanekas, D. N. Theodorou and B. Rousseau, Macromolecules, \textbf{42}, 2009, 7485.


\bibitem{Ralph} R. Everaers, S, K. Sukumaran, G. S. Grest, C. Svaneborg, A. Sivasubranian, K. Kremer, Science, \textbf{303}, (2004), 823


\bibitem{Fot} K. Foteinopoulou, N. Ch. Karayiannis, M. Laso, M. Kr\"{o}ger and M. L. Mansfield, Phys. Rev. Lett., \textbf{101}, (2008), 25702.

\bibitem{hanse} E. J. Hanse van Rensburg, D. W. Sumners, E. Wasserman and S. G. Whittington, J. Phys. A: Math. Gen., \textbf{25}, (1992), 6557.


\bibitem{LinearTying} K. Millett, A. Dobay and A. Stasiak, Macromolecules, \textbf{38}, (2004), 601.\\
K. ~Millett and B. ~Sheldon, Ser. Knots and Everything, \textbf{36}, (2005), 203.


\bibitem{virnau} P. Virnau, Y. Kantor and M. Kardar, J. Am. Chem. Soc., \textbf{127}, (2005), 15102.


\end{thebibliography}
\end{document}